\documentclass{article}
\usepackage[english]{babel}
\usepackage{amsmath,amssymb,amsthm}
\usepackage[all]{xy}
\usepackage{graphicx,graphics}

\newtheorem{theorem}{Theorem}
\newtheorem{proposition}{Proposition}
\newtheorem{lemma}{Lemma}
\theoremstyle{definition}
\newtheorem{definition}{Definition}
\newtheorem{remark}{Remark}
\newtheorem{example}{Example}

\newcommand{\gL}{\mathfrak L}
\newcommand{\gH}{\mathfrak H}
\newcommand{\id}{\mathrm{id}}
\newcommand{\Z}{\mathbb Z}
\newcommand{\Chi}{\mathcal X}

\newcommand{\tar}[2]{$\begin{array}{c} #1 \\ \mbox{#2}\end{array}$}

\begin{document}

\title{Homotopical Khovanov homology}
\author{V.O. Manturov\footnotemark[1]
, I.M. Nikonov\footnotemark[2]
} \footnotetext[1]{Bauman Moscow State Technical University, Russia;  Laboratory of
Quantum Topology, Chelyabinsk State University, Chelyabinsk, Russia}
\footnotetext[2]{Department of Mechanics and Mathematics, Moscow State University,
Russia; Faculty of Management, National Research University Higher School of Economics,
Russia}
\date{}

 \maketitle

\abstract{
 We modify the definition of the Khovanov complex for oriented links in a thickening of an oriented surface to obtain a triply graded homological link invariant with a new homotopical grading.
 }



\section{Introduction}

Virtual knot theory was discovered by Kauffman in~\cite{Kauffman}.
It turned out that virtual links are isotopy classes of links in thickened surfaces considered up to stabilizations.

Thus, virtual knots and links possess properties coming from knots as well as those coming from curves on $2$-surfaces.

It turned out~\cite{Miyazawa,DK,Man1} that this homotopical information can be converted into coefficients
of polynomials, or, in other words, polynomials can be thought of to have homotopy classes as coefficients.

Homotopy classes of curves in a closed $2$-surface can be thought of as conjugacy classes of the fundamental group of the surface. The recognition problem for such curves and groups is solved, see, e.g. \cite{HS}.

Some numerical information coming from these homotopy classes can be converted into additional gradings of the Khovanov homology~\cite{DKM,Man2}.

In the present paper, we show how the homotopical information about curves in Kauffman states
can be converted into additional gradings.

Thus, curves which appear as summands (with some other coefficients) are raised to the other level.
Though some factorizations in the grading space are needed for the complex to be well defined, the homotopy information which is converted into gradings, does not reduce to some homology classes or numerical grading; highly non-commutative group theory comes into play in the new form of gradings of the Khovanov complex.

\section{Homotopical Khovanov homology}

Let $S$ be a connected closed oriented two dimensional surface. We consider links in the thickening $S\times [0,1]$ of the surface. Such links can be described by diagrams look like $4$-valent graphs embedded into $S$ with the structure of over- and undercrossing in the vertices of the graphs. Different diagrams of the same link differ by diagram isotopies and Reidemeister moves.

\begin{definition}
Let $D$ be a link diagram in $S$. A {\em source-sink structure} on $D$ is an orientation of edges of $D$ (considered as a $4$-valent graph) such that at each crossing of $D$ there are two opposite incoming edges and two opposite outcoming edges (see Fig.~\ref{fig:source-sink_structure}).

\begin{figure}[h]
\centering\includegraphics[width=0.15\textwidth]{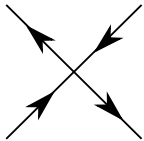}
\caption{Source-sink structure} \label{fig:source-sink_structure}
\end{figure}
\end{definition}

Many invariants of classical knots and links can be extended to the case of links in the surface $S$ with minor changes of construction or without changes at all. Let us recall the construction of Khovanov homology of links. For simplicity we work over $\Z_2$.

Let $D$ be a diagram of some oriented link $L$ in the surface $S$. Let $\Chi(D)$ be the set of crossing of the diagram $D$. At each crossing of $D$ there are two possible resolutions $0$ and $1$ (see Fig.~\ref{fig:resolution}). A {\em state} is a map $s\colon \Chi(D)\to \{0,1\}$ (or equivalently a sequence of $0$s and $1$s indexed by crossings of $D$). Any state $s$ determines resolutions at all crossings of the diagram $D$. The diagram $D_s$ which appears after resolution at each crossing according to the state $s$, has no crossings, and therefore it is a set of nonintersecting circles (closed simple curves) in $S$. Let $\gamma(s)$ be the number of circles in $D_s$ and $\beta(s)$ be the number of $1$s in $s$.

\begin{figure}[h]
\centering\includegraphics[width=0.6\textwidth]{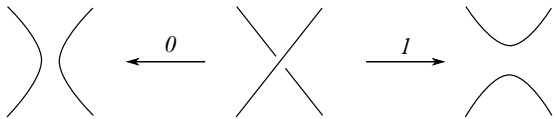}
\caption{Resolutions of a crossing} \label{fig:resolution}
\end{figure}

Thus, $D$ has $2^n$ states, where $n$ is the number of crossings in $D$. The set of states $\{0,1\}^{\Chi(D)}$ can be interpreted as the set of vertices of a $n$-dimensional cube ({\em the state cube}). The edges of the cube are oriented from the state $(0,0,\dots,0)$ to the state $(1,1,\dots,1)$.

Let $V$ be the two dimensional graded vector space with basis $v_-$ and $v_+$. Set the grading of the basis vectors to be $\deg(v_\pm)=\pm 1$. Let the maps $m\colon V\otimes V\to V$ and $\Delta\colon V\to V\otimes V$ be given by formulas

\begin{equation}\label{eq:mDelta_formula}
\begin{array}{lll}
m(v_+\otimes v_+)=v_+, & m(v_+\otimes v_-)=v_-, & \Delta(v_-)=v_-\otimes v_-\\
m(v_-\otimes v_+)=v_-, & m(v_-\otimes v_-)=0, & \Delta(v_+)=v_+\otimes v_- - v_-\otimes v_.
\end{array}
\end{equation}

With each state $s$, assign the state  a space $V(s)=V^{\otimes \gamma(s)}$.
Now, the chain space of the Khovanov complex, we are going to construct for $D$, will be the sum
$$[[D]]=\bigoplus_{s\in \{0,1\}^{\Chi(D)}}V(s).$$

The space $[[D]]$ has two gradings: {\em the homological grading}
$\beta$ and {\em the quantum grading} $q$. If $x=x_1\otimes
x_2\otimes\dots\otimes x_{\gamma(s)}\in V(s)\subset [[D]]$ where
$x_i=v_\pm, i=1,\dots,\gamma(s),$ then we set $\beta(x)=\beta(s)$ and
$q(x)=\sum_{i=1}^{\gamma(s)}\deg(x_i)+\beta(s)$.

Any edge $e = s\to s'$ connects two states $s$ and $s'$. The
resolutions $D_s$ and $D_{s'}$ differ at one
crossing. If $D$ admits a source-sink structure then one of the following pictures can happen: either a circle can split into two circles or two circles
can merge into one circle (see Fig.~\ref{fig:circle_resolution}).

\begin{figure}[h]
\centering\includegraphics[width=0.6\textwidth]{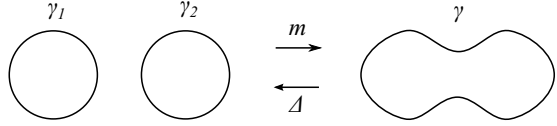}
\caption{Transformations of resolution}
\label{fig:circle_resolution}
\end{figure}

In the first case, one should consider the map
$\Delta\otimes\id^{\otimes\gamma(s)-1}\colon V(s)\to V(s')$ where
$\Delta$ acts on the factor $V$ which corresponds to the
splitting circle. In the second case, one should consider the map
$m\otimes\id^{\otimes\gamma(s)-2}\colon V(s)\to V(s')$ where $m$
acts on the factors $V\otimes V$ which correspond to the merging
circles. The considered map from $V(s)$ to $V(s')$ is called the {\em
partial differential} corresponding to the edge $e$ of the Khovanov complex and denoted as
$\partial_e$.

The sum of partial differentials for all the edges of the cube of
states $d=\sum_{e=s\to s'}\partial_e$ is the differential of
Khovanov complex. The differential does not change quantum grading and increases
homology grading by $1$.

In order to construct an invariant homology theory, we should ensure among other thing invariance under first Reidemeister moves.
To this end, let us shift the homological grading by
 $-n_-$ and the quantum grading by $n_+-2n_-$ where $n_+$ and
$n_-$ are the numbers of positive and negative crossings in the
diagram $D$. Let $C(D)$ be the shifted complex. Then homology
$Kh(D)=H(C(D),d)$ is the {\em Khovanov homology} of the link $L$.

\begin{definition}
Let $S$ be a closed two-dimensional surface.  We consider the set $\mathfrak L = [S^1;S]$ of all the homotopy classes of free oriented loops in $S$. Let $\bigcirc\in\mathfrak L$ be the homotopy class of contractible loops.

For any closed curve $\gamma$ one can consider the curve $-\gamma$ obtained from $\gamma$ by the orientation change.
Let $\gH$ be the quotient group of the free abelian group with generator set $\gL$ modulo the relations $\bigcirc = 0$ and $[\gamma]=[-\gamma]$ for all free loops $\gamma$.
\end{definition}


We consider a new homotopical grading in the complex $[[L]]$
valued in the group $\gH$. Let $s$ be a state and
$C_1,C_2,\dots,C_{\gamma(s)}$ be the circles of the resolution
$L_s$. For each element $x=x_1\otimes x_2\otimes\dots\otimes
x_{\gamma(s)}\in V(s)$ where $x_i=v_\pm$ corresponds to the circle
$C_i, i=1,\dots,\gamma(s)$, we define its {\em homotopical grading}
as
$$h(x)=\sum_{i=1}^{\gamma(s)}\deg(x_i) [C_i]\in\gH.$$

Let $d_h$ be the "part" of the differential $d$ that does not change the homotopical grading (i.e. the composition of $d$ with the projection to the subspace with the correspondent homotopical grading).
The map $d_h$ is presented as a sum $d_h=\sum_{e\in E} (\partial_e)_h$ over all the edges of the state cube. For any edge $e = s\to s'$ the partial map $(\partial_e)_h$ has the form $m_h\otimes \id^{\otimes \gamma(s)-2}$ if the partial differential is equal to  $m\otimes \id^{\otimes \gamma(s)-2}$ and
$(\partial_e)_h$ has the form $\Delta_h\otimes \id^{\otimes \gamma(s)-1}$ if the partial differential is equal to  $\Delta\otimes \id^{\otimes \gamma(s)-1}$.
Here $m_h$ and $\Delta_h$ are the parts of the maps $m$ and $\Delta$ that preserve the homotopical grading.

The formula for the map $m_h\colon V\otimes V\to V$, which corresponds to  the bifurcaction of two circles $\gamma_1$ and $\gamma_2$ to a circle $\gamma$, depends on the homotopy classes of the circles:
\begin{equation}\label{eq:mh_formula}
m_h = \left\{\begin{array}{cl}
m, & [\gamma_1]=[\gamma_2]=[\gamma]=\bigcirc;\\
m^1_h, & [\gamma_2]=\bigcirc, [\gamma_1]=[\gamma]\ne \bigcirc;\\
m^2_h, & [\gamma_1]=\bigcirc, [\gamma_2]=[\gamma]\ne\bigcirc;\\
m^0_h, & [\gamma]= \bigcirc, [\gamma_1],[\gamma_2]\ne\bigcirc;\\
0, & [\gamma_1],[\gamma_2],[\gamma]\ne\bigcirc.
\end{array}\right.
\end{equation}
The maps $m^0_h, m^1_h, m^0_h$ are defined by the formulas
\begin{equation}\label{eq:mh_formula_monoms}
\begin{array}{lll}
m^0_h(v_+\otimes v_+)=0,   & m^1_h(v_+\otimes v_+)=v_+, & m^2_h(v_+\otimes v_+)=v_+,\\
m^0_h(v_+\otimes v_-)=v_-, & m^1_h(v_+\otimes v_-)=0, & m^2_h(v_+\otimes v_-)=v_-,\\
m^0_h(v_-\otimes v_+)=v_-, & m^1_h(v_-\otimes v_+)=v_-, & m^2_h(v_-\otimes v_+)=0,\\
m^0_h(v_-\otimes v_-)=0,   & m^1_h(v_-\otimes v_-)=0, & m^2_h(v_-\otimes v_-)=0.
\end{array}
\end{equation}

Analogously, the formula for the map $\Delta_h\colon V\to V\otimes V$, which corresponds to  transformations of a circle $\gamma$ to two circles $\gamma_1$ and $\gamma_2$, depends on the homotopical classes of the circles as follows:
\begin{equation}\label{eq:Deltah_formula}
\Delta_v = \left\{\begin{array}{cl}
\Delta, & [\gamma_1]=[\gamma_2]=[\gamma]=\bigcirc;\\
\Delta^1_h, & [\gamma_2]=\bigcirc, [\gamma_1]=[\gamma]\ne \bigcirc;\\
\Delta^2_h, & [\gamma_1]=\bigcirc, [\gamma_2]=[\gamma]\ne\bigcirc;\\
\Delta^0_h, & [\gamma]= \bigcirc, [\gamma_1],[\gamma_2]\ne\bigcirc;\\
0, & [\gamma_1],[\gamma_2],[\gamma]\ne\bigcirc.
\end{array}\right.
\end{equation}

The maps $\Delta^0_h, \Delta^1_h, \Delta^0_h$ are defined by the formulas
\begin{equation}\label{eq:Deltah_formula_monoms}
\begin{array}{lll}
\Delta^0_h(v_+)=v_+\otimes v_- + v_-\otimes v_+,   & \Delta^1_h(v_+)=v_+\otimes v_-, & \Delta^2_h(v_+)=v_-\otimes v_+,\\
\Delta^0_h(v_-)=0,   & \Delta^1_h(v_-)=v_-\otimes v_-, & \Delta^2_h(v_-)=v_-\otimes v_-.
\end{array}
\end{equation}

\begin{remark}
The formulas~(\ref{eq:mh_formula}),(\ref{eq:Deltah_formula}) show that a partial map is not zero only if one of the transformed circles is not trivial.
\end{remark}

\begin{theorem}\label{thm:homotopical_differential}
The map $d_h$ is a differential.
\end{theorem}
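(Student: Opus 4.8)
The plan is to prove the equivalent statement $d_h\circ d_h=0$ by reducing it, exactly as in the classical case, to a commutativity check on each two–dimensional face of the state cube. Write $d_h=\sum_{e}(\partial_e)_h$. Since we work over $\Z_2$ and every composable pair of edges $e_1=s\to s'$, $e_2'=s'\to s''$ flips two distinct coordinates of the cube, the square $d_h^2$ is a sum over the $2$-faces, each contributing the two compositions obtained by traversing the face along its two directed paths $s\xrightarrow{e_1}s_1'\xrightarrow{e_2'}s''$ and $s\xrightarrow{e_2}s_2'\xrightarrow{e_1'}s''$. Hence it suffices to show, for every such face, the identity $(\partial_{e_2'})_h(\partial_{e_1})_h=(\partial_{e_1'})_h(\partial_{e_2})_h$ as maps $V(s)\to V(s'')$. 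When the two saddles of a face involve disjoint collections of circles, the corresponding partial maps act on disjoint tensor factors, so they commute together with their homotopy–preserving parts and the face contributes $0$; thus only the faces whose two saddles share a circle require work.

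First I would record the topological facts that justify the piecewise formulas~(\ref{eq:mh_formula})--(\ref{eq:Deltah_formula_monoms}) and that drive the case analysis. By construction $m_h$ and $\Delta_h$ are the parts of $m$ and $\Delta$ preserving the $\gH$-valued grading $h$, and I would verify this directly: on a monomial $h$ is the signed sum of the classes of the labelled circles, so comparing input and output gradings recovers each branch. The geometric inputs are that a saddle not touching a circle leaves its class unchanged; that a merge or split involving a contractible circle preserves the free homotopy class of the other participating circle, which forces $[\gamma_1]=[\gamma]$ in the one–trivial branches; and that two circles merging into a contractible one must be mutually inverse, hence equal in $\gH$ because $[\gamma]=[-\gamma]$, which forces $[\gamma_1]=[\gamma_2]$ in the $m^0_h$ and $\Delta^0_h$ branches. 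These same relations show that in the all–nontrivial branch no labelling can preserve $h$, which explains the value $0$ there.

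The main step is the face–by–face verification for faces whose saddles share a circle. For the full maps these are the familiar Khovanov configurations encoding associativity $m(m\otimes\id)=m(\id\otimes m)$, coassociativity, and the Frobenius relations $\Delta m=(m\otimes\id)(\id\otimes\Delta)=(\id\otimes m)(\Delta\otimes\id)$. For each configuration I would enumerate the homotopy classes of all participating circles, use the geometric facts above to cut down the admissible possibilities, select the appropriate branch $m^i_h$ or $\Delta^i_h$ on each edge, and check that the two composite $h$-maps agree. In most sub-cases one branch is $0$ and one checks that both paths vanish simultaneously; the substantive sub-cases are those in which every map lands in a nonzero branch, where the identity reduces to a short computation on the basis of $V^{\otimes 2}$ (or $V$).

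I expect this case analysis to be the main obstacle: because $m_h,\Delta_h$ are defined piecewise, the two edges of a shared–circle face may fall into different branches, and the difficulty is to verify that the topological constraints on the circles in such a face are exactly compatible, so that the chosen branches always yield equal composites. A useful bookkeeping device is that $(\partial_e)_h$ is the grading–preserving part of $\partial_e$: since the full partial differentials already satisfy the face identities, the grading–preserving parts of the two full composites around a face coincide automatically, and therefore the desired identity $(\partial_{e_2'})_h(\partial_{e_1})_h=(\partial_{e_1'})_h(\partial_{e_2})_h$ is equivalent to the statement that the \emph{detour} contributions—those that change the grading by some $g\ne 0$ along the first edge and restore it along the second—agree on the two paths. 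This isolates the only terms that must be compared by hand and removes the genuinely grading–preserving terms from the computation, leaving $d_h^2=0$ once the remaining finite list of shared–circle faces has been checked.
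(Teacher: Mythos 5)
Your proposal is correct and follows essentially the same route as the paper: reduce $d_h\circ d_h=0$ over $\Z_2$ to commutativity of each $2$-face of the state cube, dismiss the faces whose two saddles touch disjoint circles, and verify the remaining shared-circle faces by a case analysis on which participating circles are homotopically trivial, thereby selecting the branches $m^i_h,\Delta^i_h$ on each edge --- exactly the paper's Cases I--III and Table~1. Your additional ``detour'' observation (that commutativity of the full face reduces the check to the terms that leave and re-enter a fixed $\gH$-grading) is valid and would shorten the tabular verification, but it does not change the structure of the argument.
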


\begin{proof}
In order to show that the map $d_h$ is a differential we need to check that every $2$-face in the state cube of the diagram $D$ is commutative. Any $2$-face corresponds to a resolution switch of two crossings of the diagram whereas the resolution of the other crossings is fixed. Since any resolution of all the crossings of the diagram except the chosen two crossings yields a diagram $D'$ with two crossings and source--sink structure on it, there three possible configurations of the two remaining crossings' position (see Fig.~\ref{fig:three_configurations}). Note that any (partial) resolution of a diagram with source-sink structure inherits source-sink structure from the diagram. Here we consider only the components of the diagram $D'$ which contain the crossings and don't point out  under- and overcrossings structure because it doesn't change the states of the $2$-face (but determines the maps of the face).

\begin{figure}[h]
\centering
\begin{tabular}{ccc}
\includegraphics[width=0.25\textwidth]{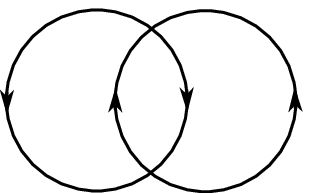} &
\includegraphics[width=0.25\textwidth]{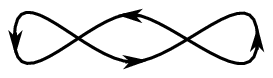} &
\includegraphics[width=0.25\textwidth]{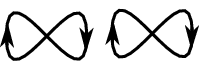} \\
Case I & Case II & Case III
\end{tabular}
\caption{Diagrams with two crossings} \label{fig:three_configurations}
\end{figure}

If the crossings belong to different components (case III) then the $2$-face is obviously commutative.

Case I. Among the resolutions of the face there are two states with two circles and two states with one circle (see Fig.~\ref{fig:caseI_resolutions}). Thus, we have two subcases depending on how many circles are there in the initial state: with one circle (for example, $e$) or with two circles ($a$ and $b$). The corresponding diagrams are shown in Fig.~\ref{fig:caseI_diagrams}. Now we should check all the variants, which homotopy classes of the circles $a,b,c,d,e,f$ are trivial and which are not.

\begin{figure}[h]
\centering\includegraphics[width=0.4\textwidth]{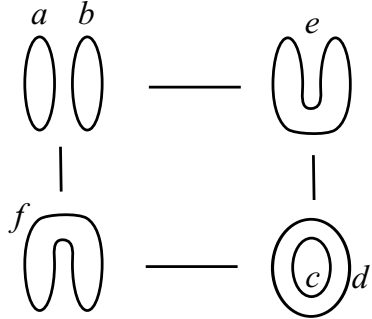}
\caption{Case I: Resolutions} \label{fig:caseI_resolutions}
\end{figure}

\begin{figure}[h]
\centering
 \tar{
  \xymatrix{ V\ar[r]^{\Delta_h} \ar[d]_{\Delta_h} & V^{\otimes 2} \ar[d]^{m_h} \\
             V^{\otimes 2}\ar[r]^{m_h} & V}
 }{Case A}\quad
 \tar{
  \xymatrix{ V^{\otimes 2}\ar[r]^{m_h} \ar[d]_{m_h} & V \ar[d]^{\Delta_h} \\
             V\ar[r]^{\Delta_h} & V^{\otimes 2}}
 }{Case B}
\caption{Case I: Face diagrams} \label{fig:caseI_diagrams}
\end{figure}
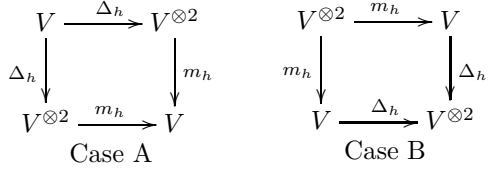

Case I.A. The initial state of the face has one circle.

Let $k$ be the number of homotopically trivial circles in the pair $a,b$, and $l$ be the number of homotopically trivial circles in the pair $c,d$. Then
$0\le k,l\le 2$. Without loss of generality assume that $k\le l$.

If $k=l$ then the $2$-face is commutative due to symmetry.

Let $k=0, l=1$. Assume that the circle $c$ is trivial and $d$ is not trivial. Then $[e]=[f]=[d]$ so $e$ and $f$ are not trivial. Hence, the commutativity relation reduces to the equality $m^2_h\Delta^2_h=0$. But $m^2_h\Delta^2_h(v_\pm)=m^2_h(v_-\otimes v_\pm)=0$, so the face is commutative.

Let $k=0, l=2$. Then $e$ and $f$ are trivial as connected sums of trivial circles $c$ and $d$. Hence, we need check the equality $m^0_h\Delta^0_h=m\Delta$. But the both maps are zero (over $\Z_2$), so the face is commutative.

The case $k=1, l=2$ is impossible. Indeed, let $[a]=\bigcirc$ and $[b]\ne\bigcirc$. Then $[e]=[b]$ is not trivial. On the other hand $e$ must be trivial as a
as a connected sum of trivial circles $c$ and $d$.

Case I.B. The initial state of the face has two circles.

If $e$ and $f$ are either both trivial or both nontrivial then the face is commutative due to symmetry.

Let $[e]=\bigcirc$ and $[f]\ne\bigcirc$. Then the circles $a,b,c,d$ must be all nontrivial. Commutativity of the face follows from the equality
$\Delta^0_h m^0_h=0$.

Case II. Among the resolutions of the face there are one state with one circle, two states with two circles and two states with one circle and one state with three circles (see Fig.~\ref{fig:caseII_resolutions}). There are three subcases (A,B,C) depending on how many circles are there in the initial state. The corresponding diagrams are shown in Fig.~\ref{fig:caseII_diagrams}.

\begin{figure}[h]
\centering\includegraphics[width=0.4\textwidth]{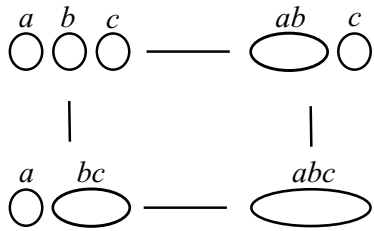}
\caption{Case II: resolutions} \label{fig:caseII_resolutions}
\end{figure}

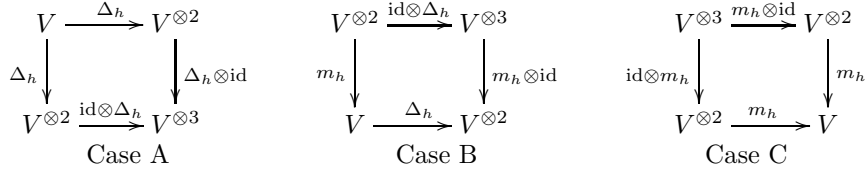
\begin{figure}[h]
\centering
 \tar{
  \xymatrix{ V\ar[r]^{\Delta_h} \ar[d]_{\Delta_h} & V^{\otimes 2} \ar[d]^{\Delta_h\otimes\id} \\
             V^{\otimes 2}\ar[r]^{ \id\otimes \Delta_h} & V^{\otimes 3}}
 }{Case A}\quad
 \tar{
  \xymatrix{ V^{\otimes 2}\ar[r]^{\id\otimes\Delta_h} \ar[d]_{m_h} & V^{\otimes 3} \ar[d]^{m_h\otimes\id} \\
             V\ar[r]^{\Delta_h} & V^{\otimes 2}}
 }{Case B}\quad
 \tar{
  \xymatrix{ V^{\otimes 3}\ar[r]^{m_h\otimes\id} \ar[d]_{\id\otimes m_h} & V^{\otimes 2} \ar[d]^{m_h} \\
             V^{\otimes 2}\ar[r]^{m_h} & V}
 }{Case C}
\caption{Case II: face diagrams} \label{fig:caseII_diagrams}
\end{figure}

Let us enumerate possible variants when some of the circles $a,b,c,ab,bc,abc$ are homotopically trivial:
\begin{enumerate}
\item  The circles $a,b,c$ are trivial.

  Then the circles $ab,bc,abc$ are trivial and we have a face of the usual Khovanov complex.

\item Among $a,b,c$ there is only one nontrivial circle.
 \begin{enumerate}
    \item $[a]\ne\bigcirc$
    \item $[b]\ne\bigcirc$
    \item $[c]\ne\bigcirc$
 \end{enumerate}

  In this case $[abc]\ne\bigcirc$, the circle $ab$ is not trivial if $[a]\ne\bigcirc$ or $[b]\ne\bigcirc$, and the circle $bc$ is not trivial if $[b]\ne\bigcirc$ or $[c]\ne\bigcirc$.

\item Among $a,b,c$ only one circle is trivial.
  \begin{enumerate}
    \item $[a]=\bigcirc$. Then $[ab]=[b]$ is not trivial and $[abc]=[bc]$.
     \begin{enumerate}
        \item $[bc]=\bigcirc$
        \item $[bc]\ne\bigcirc$
     \end{enumerate}
    \item $[b]=\bigcirc$. Then $[ab]=[a]$, $[bc]=[c]$ are not trivial.
     \begin{enumerate}
        \item $[abc]=\bigcirc$
        \item $[abc]\ne\bigcirc$
     \end{enumerate}
    \item $[c]=\bigcirc$. Then $[bc]=[b]$ is not trivial and $[abc]=[ab]$.
     \begin{enumerate}
        \item $[ab]=\bigcirc$
        \item $[ab]\ne\bigcirc$
     \end{enumerate}
 \end{enumerate}

\item None of $a,b,c$ is trivial.
  \begin{enumerate}
    \item $[ab]=[bc]=\bigcirc$. Then $[abc]=[a]=[c]$ is not trivial.
    \item $[ab]=\bigcirc, [bc]\ne\bigcirc$. Then $[abc]=[c]$ is not trivial.
    \item $[ab]\ne\bigcirc, [bc]=\bigcirc$. Then $[abc]=[a]$ is not trivial.
    \item $[ab],[bc]\ne\bigcirc$.
     \begin{enumerate}
        \item $[abc]=\bigcirc$
        \item $[abc]\ne\bigcirc$
     \end{enumerate}
 \end{enumerate}

\end{enumerate}

Any combination of cases A,B,C and 1--4.d.ii determines the maps of the face. Thus, we
can write down commutativity relation of the face in table form (see Table~1). It is easy
to check that all the equalities of the table hold, so the case II is also commutative.
The proof is finished.

\begin{table}\label{tab:caseII_relation_table}
\center{Table 1. Commutativity relation.}{ \tiny
\begin{tabular}{|l|c|c|c|} 
\hline
 $ $ & A & B &C \\
\hline
1 & \multicolumn{3}{c|}{as in usual Khovanov complex} \\
\hline
2.a & $(\Delta^1_h\otimes\id)\Delta^1_h=(\id\otimes\Delta)\Delta^1_h$
    & $(m^1_h\otimes\id)(\id\otimes\Delta)=\Delta^1_h m^1_h$
    & $m^1_h(m^1_h\otimes\id)= m^1_h(\id\otimes m)$
    \\ \hline
2.b & $(\Delta^2_h\otimes\id)\Delta^1_h=(\id\otimes\Delta^1_h)\Delta^2_h$
    & $(m^2_h\otimes\id)(\id\otimes\Delta^1_h)=\Delta^1_h m^2_h$
    & $m^1_h(m^2_h\otimes\id)= m^2_h(\id\otimes m^1_h)$
    \\ \hline
2.c & $(\Delta\otimes\id)\Delta^2_h=(\id\otimes\Delta^2_h)\Delta^2_h$
    & $(m\otimes\id)(\id\otimes\Delta^2_h)=\Delta^2_h m^2_h$
    & $m^2_h(m\otimes\id)= m^2_h(\id\otimes m^2_h)$
    \\ \hline
3.a.i  & $(\Delta^2_h\otimes\id)\Delta^0_h=(\id\otimes\Delta^0_h)\Delta$
       & $(m^2_h\otimes\id)(\id\otimes\Delta^0_h)=\Delta^0_h m$
       & $m^0_h(m^2_h\otimes\id)= m(\id\otimes m^0_h)$
       \\ \hline
3.a.ii & $0=0$
       & $0=0$
       & $0=0$
       \\ \hline

3.b.i  & $(\Delta^1_h\otimes\id)\Delta^0_h=(\id\otimes\Delta^2_h)\Delta^0_h$
       & $(m^1_h\otimes\id)(\id\otimes\Delta^2_h)=\Delta^0_h m^0_h$
       & $m^0_h(m^1_h\otimes\id)= m^0_h(\id\otimes m^2_h)$
       \\ \hline
3.b.ii & $0=0$
       & $(m^1_h\otimes\id)(\id\otimes\Delta^2_h)=0$
       & $0= 0$
       \\ \hline
3.c.i  & $(\Delta^0_h\otimes\id)\Delta=(\id\otimes\Delta^1_h)\Delta^0_h$
       & $(m^0_h\otimes\id)(\id\otimes\Delta^1_h)=\Delta m^0_h$
       & $m(m^0_h\otimes\id)= m^0_h(\id\otimes m^1_h)$
       \\ \hline
3.c.ii & $0=0$
       & $0=0$
       & $0=0$
       \\ \hline

4.a    & $(\Delta^0_h\otimes\id)\Delta^2_h=(\id\otimes\Delta^0_h)\Delta^1_h$
       & $(m^0_h\otimes\id)(\id\otimes\Delta^0_h)=\Delta^2_h m^1_h$
       & $m^2_h(m^0_h\otimes\id)= m^1_h(\id\otimes m^0_h)$
       \\ \hline
4.b    & $(\Delta^0_h\otimes\id)\Delta^2_h=0$
       & $0=0$
       & $m^2_h(m^0_h\otimes\id)= 0$
       \\ \hline
4.c    & $0=(\id\otimes\Delta^0_h)\Delta^1_h$
       & $0=0$
       & $0= m^1_h(\id\otimes m^0_h)$
       \\ \hline
4.d.i  & $0=0$
       & $0=\Delta^0_h m^0_h$
       & $0= 0$
       \\ \hline
4.d.ii & \multicolumn{3}{c|}{$0=0$}
       \\ \hline

\end{tabular}}
\end{table}
\end{proof}

\begin{remark}
1. The differentials $d$ and $d_h$ don't commute. Indeed, if this were a case then the difference $d_v=d-d_h$ would have been a differential. But in the case I.B considered above in Theorem~\ref{thm:homotopical_differential} with trivial $e$ and nontrivial $f$ we have $v_+\otimes v_-\mapsto 0\mapsto 0$ for the path of the face that goes through $e$, and $v_+\otimes v_-\mapsto v_-\mapsto v_-\otimes v_-$ for the path of the face that goes through $f$. So, the face is not commutative for the map $d_v$.

2. If we set $m^0_h=m^1_h=m^2_h=0$  and
$\Delta^0_h=\Delta^1_h=\Delta^2_h=0$ in
formulas~\eqref{eq:mh_formula}, \eqref{eq:Deltah_formula} the
modified map $d_h$ will also be a differential but the complex will
not be invariant under Reidemeister moves.
\end{remark}

Thus, $([[D]], d_h)$ is a chain complex with triple gradings. We use the conventional shift by $-n_-$ of homological grading and the shift by $n_+-2n_-$ of quantum grading (where $n_+$ and $n_-$ are the numbers of positive and negative crossings in the diagram) to obtain a chain complex $C(D)$
whose homology $Kh_h(D)=H(C(D),d_h)$ we call {\em homotopical Khovanov homology}. It is an abelian group with three gradings
$$
Kh_h(D)=\sum_{i\in\Z, j\in\Z,\mathfrak h\in\gH} Kh_h(D)_{i,j,\mathfrak h}
$$
(here $i$ is the homology grading, $j$ is the quantim grading and $\mathfrak h$ is the homotopical grading)
which turns out to be a link invariant like ordinary Khovanov homology.

\begin{theorem}\label{thm:homotopical_invariance}
Homology $Kh_h$ is an invariant of oriented links which admits source-sink structure.
\end{theorem}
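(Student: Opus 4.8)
The plan is to prove invariance move-by-move, exactly as in the classical proof of invariance of Khovanov homology, upgrading each step to keep track of the new homotopical grading. Before comparing homologies I would first check that the three (oriented) Reidemeister moves preserve the \emph{existence} of a source-sink structure, so that $(C(D),d_h)$, and hence $Kh_h$, is defined on both sides of each move. This is a local verification: given a source-sink structure on the part of the diagram outside the move, one extends it across the local tangle. For the first move the curl's loop edge can be oriented either way, and exactly one choice is compatible with the incoming/outgoing pattern forced by the through-strand; for the second and third moves the interacting strands already carry orientations and, as recalled in the proof of Theorem~\ref{thm:homotopical_differential}, any partial resolution of a source-sink diagram inherits a source-sink structure, so the structure restricts to every resolution of the local tangle. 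I would also record that, away from the local region, the resolution circles and their homotopy classes are literally unchanged by the move, so the homotopical gradings of all ``spectator'' generators are preserved.

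For the homological invariance I would construct, for each move, an explicit chain homotopy equivalence between the two complexes by Gaussian elimination (cancellation of acyclic pieces of the state cube), following the classical template. The one new ingredient is the observation that every circle created or destroyed inside a Reidemeister move --- the small loop of the first move, the bigon circle of the second, and the analogous circles in the resolutions of the third --- is contractible, i.e. has class $\bigcirc=0$ in $\gH$. Two consequences drive the argument. First, such a circle contributes $\deg(v_\pm)\cdot\bigcirc=0$ to every homotopical grading, so inserting or deleting it does not alter the $\gH$-grading of a generator. Second, inspecting formulas~\eqref{eq:mh_formula_monoms} and~\eqref{eq:Deltah_formula_monoms}, the partial maps that merge or split a contractible circle with a strand act as the classical unit/counit on the contractible factor while being \emph{transparent} to the (possibly nontrivial) strand: $m^1_h(x\otimes v_+)=x$ and $m^1_h(x\otimes v_-)=0$, and dually $\Delta^1_h(x)=x\otimes v_-$. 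These are precisely the identity-type components that make the off-diagonal differentials of the state cube isomorphisms, so the classical cancellations go through.

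Concretely, for the first move the curl crossing splits the complex as a mapping cone of the merge (or split) with the contractible loop circle; the component carrying $v_+$ (resp. producing $v_-$) on that circle is an isomorphism onto $C(D)$, and since the complementary component maps to zero, the induced differential on the surviving summand is exactly $d_h$ of the reduced diagram, with the shifts by $-n_-$ and $n_+-2n_-$ absorbing the homological and quantum shifts as classically. The second move is handled by the same cancellation applied to the off-diagonal vertices of the $2\times2$ state cube, leaving the two-strand diagram. For the third move I would reduce to the second by the standard mapping-cone argument --- viewing one of the three crossings as a cone and applying the (natural) equivalence produced for the second move to the two remaining crossings --- or equivalently exhibit the common simplified complex to which both sides reduce. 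In every instance the only cancellations used involve a contractible circle, so the transparency property guarantees that the equivalences are $\gH$-graded and that induced maps coincide with $d_h$.

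The main obstacle is bookkeeping rather than a single hard idea: one must verify, in each orientation and sign variant of each move, that the induced differential on the reduced complex \emph{equals} $d_h$ (and not merely something chain homotopic with the right homology) and that every intermediate map respects the $\gH$-grading. The delicate point is that, unlike ordinary Khovanov homology, the pair $(m_h,\Delta_h)$ does not form a Frobenius algebra --- these are only the grading-preserving parts of $m,\Delta$ --- so one cannot invoke the usual TQFT/functoriality black boxes; each homotopy equivalence has to be written out at the level of complexes and checked directly against formulas~\eqref{eq:mh_formula}, \eqref{eq:Deltah_formula}, much as the $2$-face commutativity was checked by cases in Theorem~\ref{thm:homotopical_differential}. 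A secondary subtlety arises for the third move, where several circles and configurations occur and one must match the homotopy classes of surface circles consistently on the two sides; but since all circles outside the local tangle retain their classes and all circles inside it are contractible, no global monodromy obstruction appears.
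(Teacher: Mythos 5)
Your proposal is correct and follows essentially the same route as the paper: the paper's key Lemma~\ref{lem:homotopical_invariance} is exactly your ``transparency'' observation that, because every circle created or destroyed in a Reidemeister move is contractible, the relevant partial maps are $m$ or $m^1_h$ (resp. $\Delta$ or $\Delta^1_h$), which restrict to isomorphisms $V\otimes v_+\to V$ and $V\to V\otimes V/(v_+=0)$, after which the classical cancellation argument of Bar--Natan is invoked verbatim. You spell out the second and third moves and the preservation of source-sink structures more explicitly than the paper does, but the underlying argument is the same.
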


\begin{proof}

Let $D$ be a diagram of an oriented link with source-sink structure and let $D'$ be the diagram that obtained from $D$ by a first Reidemeister move. Then the complex $[[D']]$ looks like $[[D]]\stackrel{\Delta_h}{\longrightarrow} [[D\sqcup\bigcirc]]$ (if the new crossing is negative) or  $[[D\sqcup\bigcirc]]\stackrel{m_h}{\longrightarrow} [[D]]$ (if the new crossing is positive). The complex $[[D\sqcup\bigcirc]]$ is isomorphic to $[[D]]\otimes V$. It contains a cubcomplex $[[D\sqcup\bigcirc]]_{v_+}=[[D]]\otimes v_+$ and we denote $[[D\sqcup\bigcirc]]_{v_+=0}=[[D\sqcup\bigcirc]]/[[D]]_{v_+}$ the quotient complex by this subcomplex.

\begin{lemma}\label{lem:homotopical_invariance}
The maps $[[D\sqcup\bigcirc]]_{v_+}\stackrel{m_h}{\longrightarrow} [[D]]$ and $[[D]] \stackrel{\Delta_h}{\longrightarrow} [[D\sqcup\bigcirc]]_{v_+=0}$ are isomorphisms of complexes.
\end{lemma}
\begin{proof}
Since the small circle is trivial, $m_h=m$ or $m_h=m^1_h$. In both cases $m_h$ establishes an isomorphism between $V\otimes v_+$ and $V$ that extends to the desired isomorphism of complexes. For the second map we have $\Delta_h=\Delta$ or $\Delta^1_h$ which define an isomorphism between $V$ and $V\otimes V/(v_+=0)$.
\end{proof}

With the above lemma we can use the proof of invariance of the usual Khovanov homology (see, for example,~\cite{BN}) without any changes.
\end{proof}

\begin{remark}
In fact, homotopical Khovanov homology can be defined for links without source-sink
structure. Indeed, in general case an additional (nonorientable) bifurcation type appears
where one circle transforms to another circle (see Fig.~\ref{fig:nonorient_bifurcation}).

\begin{figure}[h]
\centering\includegraphics[width=0.6\textwidth]{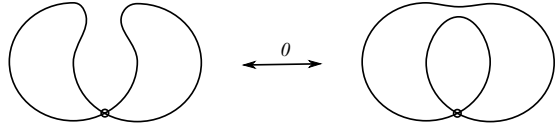}
\caption{Nonorientable bifurcation of a cirle}
\label{fig:nonorient_bifurcation}
\end{figure}

The circled crossings in Fig.~\ref{fig:nonorient_bifurcation},\ref{fig:nonorient_2_faces}
are considered as virtual but not real crossings, i.e. as additional intersections which
appear by projection of the resolution into the plane.

In the state cube this bifurcation corresponds to  zero map. So, in order to show that
homotopical Khovanov complex is well defined we must check $2$-faces which include the
new type of state transformation (see Fig.~\ref{fig:nonorient_2_faces}).

\begin{figure}[h]
\centering
\begin{tabular}{ccc}
\includegraphics[width=0.3\textwidth]{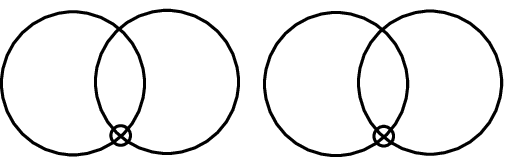} &
\includegraphics[width=0.3\textwidth]{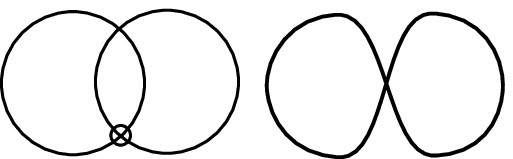} &
\includegraphics[width=0.2\textwidth]{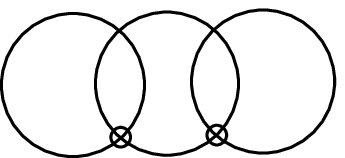} \\
Case IV & Case V & Case VI
\end{tabular}
\begin{tabular}{cc}
\includegraphics[width=0.25\textwidth]{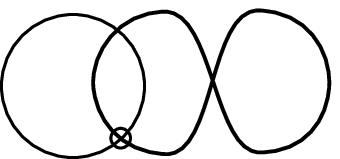} &
\includegraphics[width=0.17\textwidth]{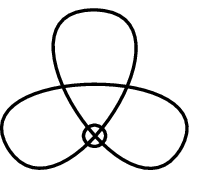} \\
Case VII & Case VIII
\end{tabular}
\caption{Nonorientable diagrams with two crossings} \label{fig:nonorient_2_faces}
\end{figure}

The commutativity relation for these faces is reduced to the equality $0=0$ for all cases except one (see Fig.~\ref{fig:caseVIII}) where it takes the form $m_h\Delta_h=0$. But this equality holds because $m\Delta=m^0_h\Delta^0_h=m^1_h\Delta^1_h=m^2_h\Delta^2_h=0$.

\begin{figure}[h]
\centering
$  \xymatrix{ V\ar[r]^{\Delta_h} \ar[d]_{0} & V^{\otimes 2} \ar[d]^{m_h} \\
             V\ar[r]^{0} & V}
$
\caption{Case VIII: nontrivial face diagram} \label{fig:caseVIII}
\end{figure}

The proof of invariance of homotopical Khovanov homology needs no corrections since it does not encounter the new bifurcation.

Thus, the following theorem holds.
\begin{theorem}
Homology $Kh_h$ is an invariant of oriented links.
\end{theorem}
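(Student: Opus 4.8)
The plan is to upgrade Theorem~\ref{thm:homotopical_invariance} by dropping the source-sink hypothesis, re-examining the two pillars its proof rested on: that $d_h$ squares to zero and that the resulting complex is invariant under the three Reidemeister moves. For a general diagram the only structural novelty is the nonorientable bifurcation of Fig.~\ref{fig:nonorient_bifurcation}, in which a single circle is replaced by a single circle; as already observed, in the state cube this edge carries the zero partial map. Consequently the chain space $[[D]]$, the homological and quantum gradings, the homotopical grading valued in $\gH$, and the two grading shifts by $-n_-$ and $n_+-2n_-$ are all defined exactly as in the source-sink case, so nothing has to be redone at the level of the underlying graded object.

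First I would check that $d_h$ is still a differential, i.e. $d_h^2=0$. As in Theorem~\ref{thm:homotopical_differential} this reduces to verifying that every $2$-face of the state cube commutes. The faces in which neither of the two switched crossings yields a nonorientable bifurcation are exactly the configurations of Cases~I--III, already settled in Theorem~\ref{thm:homotopical_differential}. The genuinely new faces are those in which at least one edge is a nonorientable bifurcation; these are enumerated as Cases~IV--VIII in Fig.~\ref{fig:nonorient_2_faces}. Since a nonorientable edge contributes the zero map, almost all of these faces give the trivial relation $0=0$, and the single substantive computation is Case~VIII of Fig.~\ref{fig:caseVIII}, where commutativity collapses to $m_h\Delta_h=0$. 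This identity holds on every homotopy configuration because $m\Delta=m^0_h\Delta^0_h=m^1_h\Delta^1_h=m^2_h\Delta^2_h=0$, so $d_h^2=0$ for an arbitrary link diagram.

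Finally I would establish invariance under Reidemeister moves, and here the key point is that no move actually triggers the nonorientable bifurcation. A first Reidemeister move still creates or destroys a \emph{contractible} small circle, so the relevant local maps are $m_h\in\{m,m^1_h\}$ and $\Delta_h\in\{\Delta,\Delta^1_h\}$, and Lemma~\ref{lem:homotopical_invariance} applies verbatim to produce the isomorphisms of complexes that drive the argument. For the second and third moves the local tangles and their resolutions coincide with the source-sink situation, and the homotopy classes of the participating circles are unchanged by the isotopy realizing the move, so the acyclic-subcomplex / Gaussian-elimination reasoning used for ordinary Khovanov homology as in~\cite{BN} transports without modification. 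I expect the only place requiring genuine attention — the main obstacle, such as it is — to be confirming that the nonorientable bifurcation never intervenes in the chain homotopies witnessing Reidemeister invariance, so that the bookkeeping of Theorem~\ref{thm:homotopical_invariance} is undisturbed; once this is verified, $Kh_h(D)\cong Kh_h(D')$ for any pair of diagrams related by Reidemeister moves, and $Kh_h$ is a link invariant.
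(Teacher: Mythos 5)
Your proposal is correct and follows essentially the same route as the paper: both observe that the only new feature without a source-sink structure is the nonorientable bifurcation carrying the zero partial map, reduce the check of $d_h^2=0$ to the new faces of Fig.~\ref{fig:nonorient_2_faces} (all trivially $0=0$ except Case~VIII, which collapses to $m_h\Delta_h=0$), and note that the Reidemeister-move argument of Theorem~\ref{thm:homotopical_invariance} carries over unchanged since the new bifurcation never appears there.
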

\end{remark}

Let us mention some simple properties of homotopical Khovanov homology.

\begin{proposition}\label{prop:hkh_classical_link}
If $L$ is a classical link in $S$, i.e. there is a $3$-ball $B\subset S\times[0,1]$ which contains $L$, then homotopical Khovanov homology of $L$ coincides with the usual Khovanov homology of $L$.
\end{proposition}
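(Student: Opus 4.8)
The plan is to reduce the computation to a diagram living entirely inside a disk, where every Kauffman-state circle is contractible, so that the differentials $d_h$ and $d$ literally coincide. Since $Kh_h$ is a link invariant (Theorem~\ref{thm:homotopical_invariance}), I am free to compute it from any convenient diagram. Using the hypothesis that $L$ is contained in a $3$-ball $B\subset S\times[0,1]$, I would isotope $B$ into a standard ball of the form $\mathcal D\times[0,1]$, where $\mathcal D\subset S$ is an embedded disk, and choose a diagram $D$ of $L$ whose underlying $4$-valent graph lies inside $\mathcal D$. A classical link contained in a ball is, by definition, the image of a link in the standard ball, so such a disk diagram always exists; in particular it admits a source-sink structure.

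Next I would observe that for such a $D$, every resolution $D_s$ is a collection of disjoint simple closed curves contained in the disk $\mathcal D$. By the Jordan--Schoenflies theorem each such curve bounds a disk inside $\mathcal D\subset S$, and is therefore contractible in $S$: one has $[C_i]=\bigcirc$ for every circle $C_i$ of every state $s$. Consequently every elementary merge or split encoded by an edge $e=s\to s'$ of the state cube involves only circles of homotopy class $\bigcirc$.

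Then the formulas~\eqref{eq:mh_formula} and~\eqref{eq:Deltah_formula} apply in their top case $[\gamma_1]=[\gamma_2]=[\gamma]=\bigcirc$, giving $m_h=m$ and $\Delta_h=\Delta$ on every edge. Hence $(\partial_e)_h=\partial_e$ for all edges, so $d_h=d$ as maps on $[[D]]$, and the two complexes are literally equal. After the common grading shifts by $-n_-$ and $n_+-2n_-$, the complex $C(D)$ computing $Kh_h$ coincides with the one computing ordinary Khovanov homology, whence $Kh_h(D)\cong Kh(D)$ as bigraded groups. Moreover $h(x)=\sum_i\deg(x_i)[C_i]=0$ for every basis element, since each $[C_i]=\bigcirc=0$ in $\gH$, so the entire homology sits in homotopical grading $0$; this is precisely the sense in which $Kh_h$ coincides with the usual Khovanov homology.

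The main obstacle is the first step: making precise that a classical link admits a diagram supported in a disk, which amounts to unwinding the definition of ``contained in a $3$-ball'' and invoking invariance to license the choice of diagram. Once a disk diagram is in hand, the remainder is a direct reading of the definitions, with no face-commutativity checks required, so I expect no genuine difficulty beyond this initial reduction and the elementary remark that simple closed curves in a disk are null-homotopic in $S$.
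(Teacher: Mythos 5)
Your proposal is correct and follows essentially the same route as the paper: all state circles are null-homotopic, so formulas~\eqref{eq:mh_formula} and~\eqref{eq:Deltah_formula} reduce to their top case, $d_h=d$, and everything sits in homotopical grading $0$. You are merely more explicit than the paper about the preliminary step of invoking invariance to pass to a diagram supported in a disk, a point the paper's one-line proof leaves implicit.
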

\begin{proof}
If $L$ is a classical link then the homotopical grading of all elements in the Khovanov complex is zero, so maps $m_h$ and $\Delta_h$ coincide with $m$ and $\Delta$. Hence, in this case we have the usual Khovanov complex.
\end{proof}

\begin{proposition}\label{prop:hkh_symmetry}
Let $L$ be an oriented link in $S$. Let $-L$ differ from $L$ by orientation change
and $\check{L}$ be the link with the opposite source-sink structure. Then $Kh_h(-L)_{i,j,\mathfrak h}=Kh_h(L)_{i,j,\mathfrak h}$
, $Kh_h(\bar L)_{i,j,\mathfrak h}=Kh_h(L)_{-i,-j,-\mathfrak h}$
and $Kh_h(\check L)_{i,j,\mathfrak h}=Kh_h(L)_{i,j,\mathfrak h}$ for all $i\in\Z, j\in\Z,\mathfrak h\in\gH$. In other words, homotopical Khovanov homology does not depend on the choice of source-sink structure and does not detect invertibility of knots.
\end{proposition}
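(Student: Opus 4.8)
The plan is to establish the three isomorphisms separately: the orientation reversal $-L$ and the source-sink reversal $\check L$ will follow from a verbatim identification of complexes, while the mirror image $\bar L$ requires the classical Khovanov duality argument, refined so as to track the new homotopical grading. Throughout I would use the relation $[\gamma]=[-\gamma]$ in $\gH$, which is exactly what neutralises the two orientation changes.

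First I would dispose of $-L$ and $\check L$. Reversing the orientation of the link changes neither the source-sink structure nor the resolution circles (whose orientations are induced by the source-sink structure), hence leaves every class $[C_i]\in\gH$ and every map $m_h,\Delta_h$ unchanged; moreover reversing every strand preserves the sign of each crossing, so $n_+,n_-$ and the grading shifts are identical. Thus $([[{-L}]],d_h)=([[L]],d_h)$ verbatim and $Kh_h(-L)_{i,j,\mathfrak h}=Kh_h(L)_{i,j,\mathfrak h}$. Passing to the opposite source-sink structure $\check L$ reverses the induced orientation of every resolution circle but alters neither the $0/1$-labelling of resolutions (fixed by the over/undercrossing data) nor the crossing signs (fixed by the link orientation). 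Since $[\gamma]=[-\gamma]$, each class $[C_i]$ and hence each map $m_h,\Delta_h$ is again unchanged, the complexes coincide, and $Kh_h(\check L)_{i,j,\mathfrak h}=Kh_h(L)_{i,j,\mathfrak h}$.

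The substantive case is the mirror image $\bar L$. Taking the mirror switches every crossing, so the state cube of $\bar L$ is obtained from that of $L$ by the complementation $s\mapsto \mathbf 1-s$ together with reversal of all edge orientations; the $0$- and $1$-resolutions are interchanged, while the resolution circles, and their classes in $\gH$, remain the same curves. On $V$ I would use the nondegenerate $\Z_2$-pairing $\langle v_+,v_-\rangle=\langle v_-,v_+\rangle=1$, $\langle v_+,v_+\rangle=\langle v_-,v_-\rangle=0$, which exhibits $(V,m,\Delta)$ as a Frobenius algebra and classically identifies the adjoint of $m$ with $\Delta$; transposing the cube then identifies $([[\bar L]],d_h)$ with the dual complex of $([[L]],d_h)$. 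The key step is to check that this duality refines to the homotopical pieces, i.e.\ that the adjoint of $m^k_h$ equals $\Delta^k_h$ (and conversely) for each $k\in\{0,1,2\}$ as well as for the full $m,\Delta$. This is a short computation from \eqref{eq:mh_formula_monoms} and \eqref{eq:Deltah_formula_monoms}: for instance $(m^1_h)^{\ast}(v_+)=v_+\otimes v_-=\Delta^1_h(v_+)$ and $(m^1_h)^{\ast}(v_-)=v_-\otimes v_-=\Delta^1_h(v_-)$, and analogously for $k=0,2$. Crucially the homotopy-type label is preserved under the mirror: the circle declared trivial in a merge is the very circle declared trivial in the corresponding split, so $m^k_h$ is matched with $\Delta^k_h$ rather than with some $\Delta^{k'}_h$. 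Because $\langle\cdot,\cdot\rangle$ pairs $v_+$ with $v_-$, it reverses $\deg(v_\pm)$ and hence negates the homotopical grading $h$; combined with the usual bookkeeping (the mirror interchanges $n_+$ and $n_-$, so the homological and quantum gradings pass to their negatives) this yields $Kh_h(\bar L)_{i,j,\mathfrak h}=Kh_h(L)_{-i,-j,-\mathfrak h}$.

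I expect the main obstacle to be exactly this bookkeeping in the mirror case: verifying that the combinatorial identification of the two state cubes matches each merge map $m^k_h$ with the transpose of the \emph{correctly labelled} split map $\Delta^k_h$, and confirming that all three gradings transform precisely as stated once the conventional shifts by $-n_-$ and $n_+-2n_-$ are applied. The two orientation/structure statements, by contrast, reduce entirely to the relation $[\gamma]=[-\gamma]$ in $\gH$ and demand no real computation.
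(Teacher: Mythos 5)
Your proof is correct and takes the same route the paper intends: the paper's entire proof is the one-line remark that the argument ``does not differ from the proof for the usual Khovanov homology,'' and your write-up is exactly that standard argument (verbatim identification of complexes for $-L$ and $\check L$, Frobenius duality of the state cube for $\bar L$) augmented by the two verifications that are genuinely new here --- that the relation $[\gamma]=[-\gamma]$ in $\gH$ neutralises the orientation changes, and that the adjoint of $m^k_h$ is $\Delta^k_h$ so the pairing negates the homotopical grading. There is no gap; you have in fact supplied the details the paper omits.
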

The proof of the statement does not differ from the the proof for the usual Khovanov homology.


\begin{example}[Simple closed curves in the surface]
Let $\gamma$ be a simple closed curve in $S$. It can be considered as a knot diagram without crossings. Usual Khovanov homology of $\gamma$ is $Kh(\gamma)=\Z_2\oplus\Z_2$ with homological grading equal to $0$ and quantum gradings equal to $\pm 1$. This homology does not depend on how the curve lie in the surface. Homotopical Khovanov homology is more sensitive. $Kh_h(\gamma)=\Z_2\oplus\Z_2$ where the first summand has homological grading $0$, quantum grading $-1$ and homotopical grading $-[\gamma]$, and the second summand has homological grading $0$, quantum grading $1$ and homotopical grading $[\gamma]$. Thus, the homotopy type of $\gamma$ is kept in the homotopical grading.

\begin{theorem}\label{thm:hkh_simple_curve}
Let knots $K$ and $K'$ in $S$ have complexity $0$, i.e. admit diagrams without crossings. Then $K$  isotopic to $K'$ (up to orientation reversion) if and only if $Kh_h(K)=Kh_h(K')$.
\end{theorem}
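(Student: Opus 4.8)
The plan is to reduce the statement to a classical fact about simple closed curves on surfaces, using the computation of $Kh_h$ for a crossingless diagram recorded in the Example preceding the theorem. A complexity-$0$ knot $K$ is presented by an embedded simple closed curve $\gamma\subset S$, and that Example shows that $Kh_h(K)$ is the group $\Z_2\oplus\Z_2$ supported in the two tridegrees $(0,-1,-[\gamma])$ and $(0,1,[\gamma])$, where $[\gamma]\in\gH$ is the image of the free homotopy class of $\gamma$. Thus $Kh_h(K)$ carries exactly one piece of information beyond its classical counterpart, namely the element $[\gamma]\in\gH$, and the whole theorem hinges on how faithfully $[\gamma]$ remembers the curve $\gamma$.

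For the \emph{only if} direction I would argue directly from invariance. If $K$ and $K'$ are isotopic, then $Kh_h(K)=Kh_h(K')$ by Theorem~\ref{thm:homotopical_invariance}; and if they differ by orientation reversal, then $Kh_h(K)=Kh_h(-K)=Kh_h(K')$ by Proposition~\ref{prop:hkh_symmetry}. Hence isotopy up to orientation reversion forces equality of the homotopical Khovanov homology, with no computation needed.

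For the converse I would extract the homotopy class from the graded group. Writing $\gamma,\gamma'$ for crossingless diagrams of $K,K'$, the equality $Kh_h(K)=Kh_h(K')$ of triply graded groups forces agreement of the summands in each fixed tridegree; comparing the pieces in homological-quantum bidegree $(0,1)$ yields $[\gamma]=[\gamma']$ in $\gH$. Now $\gH$ is, by its definition, the free abelian group on the set of free homotopy classes of loops after identifying each class with its reverse and killing the contractible class $\bigcirc$; its nonzero basis elements are therefore exactly the unoriented essential free homotopy classes. Consequently $[\gamma]=[\gamma']$ means either that both $\gamma,\gamma'$ are contractible, or that they represent the same basis element, i.e.\ that $\gamma$ is freely homotopic to $\gamma'$ or to $-\gamma'$. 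In the contractible case both curves bound embedded disks in $S$ and are isotopic to a trivial circle; in the essential case I obtain that $\gamma$ and $\gamma'$ are freely homotopic up to orientation.

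The remaining and genuinely geometric step is to upgrade free homotopy to isotopy, and this is where I expect the main obstacle. I would invoke the classical theorem of Baer (and Zieschang, Epstein) that two essential simple closed curves on a closed surface which are freely homotopic are ambient isotopic within the surface. An ambient isotopy of $S$ carrying $\gamma$ to $\gamma'$ (possibly after reversing orientation) crosses with the identity on $[0,1]$ to give an isotopy of the knots in $S\times[0,1]$, so $K$ and $K'$ are isotopic up to orientation reversion. The only care needed is bookkeeping of the orientation ambiguity introduced both by the relation $[\gamma]=[-\gamma]$ in $\gH$ and by Proposition~\ref{prop:hkh_symmetry}, which is precisely the ``up to orientation reversion'' clause in the statement; everything else is routine once Baer's theorem is in hand.
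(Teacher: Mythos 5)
Your proposal is correct and follows essentially the same route as the paper: read off $[\gamma]=[\gamma']$ in $\gH$ from the homotopical grading of the crossingless computation, deduce free homotopy up to orientation reversal from the definition of $\gH$, and conclude via the Baer--Zieschang theorem. Your write-up is merely more explicit than the paper's (which omits the ``only if'' direction and the separate treatment of the contractible case), but no new idea is involved.
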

\begin{proof}
Let $K$ and $K'$ are presented by simple closed curve in $S$. If
$Kh_h(K)=Kh_h(K')$ then  $[K]=[K']\in\gH$ and $[K]=[\pm K']\in\gL$.
Hence there is a homotopy between simple closed curves $K$ and $K'$.
Then by Baer--Zieschang theorem there is an isotopy of $S$ that
sends $K$ to $K'$.
\end{proof}
\end{example}

\begin{example}[Knots in torus]
Let $K$ be a knot in the thickening of torus $T^2$. For torus we have $\gL=H_1(T^2,\Z)$. Let $D$ be a diagram of $K$ in the torus.

For any state $s$ the resolution $D_s$ is a set of nonintersecting circles in $T^2$. If $\gamma_1$ and $\gamma_2$ are nontrivial circles in $D_s$ then their homology classes coincide up to sign. Then homotopy gradings of elements in $V(s)$ equal to $k[\gamma_1], k\in\Z$. If the homology class $[D]=[K]$ of the source-sink structure of the knot is not zero the gradings will be proportional to $[K]$. If the homology class $[D]$ is trivial then gradings can be nonproportional but must be even.

Consider a link $L$ in the torus with two crossings (see Fig.~\ref{fig:torus_knot}). Homotopical Khovanov homology of the link $L$ are isomorphic to $(\Z_2)^8$. The gradings of generators of $Kh_h(L)$ are $(-1,-2,0)$, $(0,0,0)$, $(0,0,0)$, $(1,2,0)$ and $(0,2,2[\lambda])$, $(0,-2,-2[\lambda])$, $(0,2,2[\mu])$, $(0,-2,-2[\mu])$ where $\lambda$ and $\mu$ are the longitude and the meridian of the torus. Remark that the link $L$ as a virtual link is a virtualization of the trivial link with two components. Hence, the usual Khovanov homology of $L$ is $Kh(L)=(\Z_2)^4$ with gradings of generators equal to $(0,-2),(0,0),(0,0),(0,2)$.

\begin{figure}[h]
\centering\includegraphics[width=0.2\textwidth]{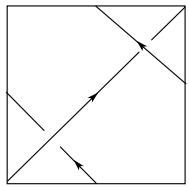}
\caption{Link in the torus} \label{fig:torus_knot}
\end{figure}
\end{example}

\section*{Acknowledgments}
The authors were partially supported with grants RFBR-13-01-00830, RFBR-14-01-91161 and
RFBR-14-01-31288 and with grant of RF President NSh -- 581.2014.1. The first author is
partially supported by Laboratory of Quantum Topology of Chelyabinsk State University
(Russian Federation government grant 14.Z50.31.0020)

\end{document}